\definecolor{greenish}{RGB}{27,158,119}
\definecolor{MyOrange}{RGB}{217,95,2}
\definecolor{MyPurple}{RGB}{117,112,179}
\newcolumntype{C}[1]{>{\centering\arraybackslash}m{#1}}
\theoremstyle{plain}
\newtheorem{thm}{Theorem}
\newtheorem{lem}[thm]{Lemma}
\newtheorem{cor}[thm]{Corollary}
\theoremstyle{definition}
\newtheorem{defn}[thm]{Definition}
\newcommand{\p}[1]{p_{(#1)}}
\title{New upper bounds on the order of mixed cages of girth 6}
\author[1]{Gabriela Araujo-Pardo}
\author[2]{Lydia Mirabel Mendoza-Cadena}
\affil[1]{Instituto de Matemáticas, Universidad Nacional Autónoma de México, Campus Juriquilla, Querétaro, Mexico.}
\affil[2]{Center for Mathematical Modeling (CNRS IRL2807), Universidad de Chile, Santiago, Chile}
\date{ }
\begin{document}

\maketitle


\begin{abstract}
    A $[z,r;g]$-mixed cage is a mixed graph of minimum order such that each vertex has $z$ in-arcs, $z$ out-arcs, $r$ edges, and it has girth $g$, and the minimum order for $[z,r;g]$-mixed graphs is denoted by  $n[z,r;g]$. 
    
    In this paper, we present an infinite family of mixed graphs with girth $6$, that improves, in some cases, the families that we give in G. Araujo-Pardo and L. Mendoza-Cadena. \textit{On Mixed Cages of girth 6}, arXiv:2401.14768v2.

    In particular, if $q$ is an even prime power we construct a family of graphs that satisfies $n[\frac{q}{4},q;6]\leq 4q^2-4$, and if $q$ is an odd prime power, and
    $\frac{q-3}{2}$ is odd then our family satisfies that $n[\frac{q-1}{4},q;6]\leq 4q^2-4$, otherwise 
    $n[\frac{q-3}{4},q;6]\leq 4q^2-4$.
        
 \medskip
    
    \textbf{Keywords:} Mixed cages; girth; elliptic semiplane of type $L$; projective plane; upper bounds.
\end{abstract}

In this manuscript, we consider simple graphs: no parallel arcs or edges are permitted, and neither can there be a parallel arc and edge.

Let $G=(V;E\cup A)$ be a mixed graph where $V(G)$ denotes the set of vertices, $E(G)$ the set of (undirected) edges, and $A(G)$ the set of arcs. To distinguish clearly between these types, we represent an edge between vertices $u$ and $v$ as $uv$, and a directed arc from $u$ to $v$ as $(u,v)$. Two vertices $u$ and $v$
are said to be edge-adjacent (respectively, arc-adjacent) if there exists an edge (respectively, an arc) between them. We represent walks, paths, and cycles as sequences of the form $(v_0, v_1, \dots, v_n)$, where each consecutive pair $v_i$ and $v_{i+1}$ is either edge- or arc-adjacent. The \emph{girth} of the graph is defined as the length of its shortest cycle. Given a subset $X \subseteq V$, the graph $G[X]$ is obtained by removing all vertices in $V \setminus X$ of $G$  along with any edges or arcs incident to them.

A mixed graph is called \emph{regular} if each vertex has exactly $z$ incoming arcs, $z$ outgoing arcs, and $r$ undirected edges. If, in addition, the graph has girth $g$ (i.e., the length of its shortest cycle is $g$), it is referred to as a \emph{$[z,r;g]$-mixed graph} of girth $g$.
A \emph{$[z,r;g]$-mixed cage} is a $[z,r;g]$-mixed graph that has the smallest possible order, where the \emph{order} refers to the number of vertices in the graph. The minimum order of such graphs is denoted by $n[z,r;g]$.
\\

The concept of mixed cages was first introduced by Araujo-Pardo, Hernández-Cruz, and Montellano-Ballesteros~\cite{araujo2019mixed}. In their work, they constructed specific examples of mixed cages for particular parameter values and derived several lower bounds. They established a lower bound on the order of $[1,r;g]$-mixed cages, which is now known as the AHM bound. This bound is derived from a version of Moore’s lower bound (see e.g.~\cite{exoo2012dynamic}).
 
The authors in~\cite{araujo2019mixed} also showed that the AHM bound can be achieved when $r=2$, regardless of the value of $g$. Additionally, Exoo~\cite{exoo2023mixed} demonstrated that this bound is attainable for $r=3$ and $g=5,6$.

Araujo-Pardo, {De la Cruz}, and González-Moreno~\cite{araujo2022monotonicity} showed that the order of a $[z,r;g]$-mixed cages is monotone with respect to $g$ when $z\in \{ 1,2 \}$. Furthermore, it was established that $[z,r;g]$-mixed cages are 2-connected, and, for arbitrary values of $z$, the corresponding mixed graphs are strongly connected. In the specific case for $g = 5$ the study provided additional constructions alongside improved lower bounds. These results are obtained through the use of incidence geometry, with a particular focus on the elliptic semiplane of type $C$--a structure also referred to in the geometric literature as the biaffine plane of type 1 (see, e.g., \cite{Funk2009,araujo2012familiesofsmall,araujo2022monotonicity} for the former terminology and \cite{kiss2019finite,araujo2024mixedcagesgirth6,APKP2025} for the latter).

Exoo~\cite{exoo2023mixed} concentrated on $z \in \{1, 2\}$ and showcased a range of mixed cages along with upper bounds for various values of $r \leq 5$ and $g \leq 8$, employing computer searches.

Jajcayov\'a and Jajcay~\cite{jajcajova2024totallyregular} developed $[z,r;g]$-mixed graphs by substituting some edges of certain regular undirected graphs with arcs.

Araujo-Pardo and Mendoza-Cadena~\cite{araujo2024mixedcagesgirth6} constructed a family of mixed graphs of girth6 based on the incidence graph of an elliptic semiplane of type $C$ (also known as a biaffine plane of type 1). This semiplane arises by deleting a \emph{flag} from a projective plane--that is, a point $p$ and an incident line~$l$, along with all lines through~$p$ and all points on~$l$.
In a subsequent work~\cite{araujopardo2025noteGirthFive}, they constructed a family of mixed graphs of girth 5 using the incidence graph of an elliptic semiplane of type $L$ (also known as a biaffine plane of type 2). In this case, the construction involves deleting an \emph{antiflag} from a projective plane--again, a point $p$ and a line $l$, but this time with $p$ and $l$ non-incident-along with all lines through~$p$ and all points on~$l$. This approach was inspired by a similar construction by Abajo and C. Balbuena, Bendala and Marcote~\cite{abajo2019improving} for obtaining small regular graphs of girth~5.

In this manuscript, we present a new construction of mixed graphs with a girth of 6, utilizing the incidence graphs of elliptic semiplanes of type L. Our approach is similar to that in \cite{araujo2024mixedcagesgirth6}. As in our previous work, we employ a projective plane over a finite field. However, in this case, we utilize the incidence graph of an elliptic semiplane of type $L$ along with the multiplicative group of the finite field, as discussed in \cite{araujopardo2025noteGirthFive}. In contrast, our earlier construction relied on the additive group and the incidence graph of an elliptic semiplane of type $C$ (or a biaffine plane of type 1). It is important to note that while \cite{araujo2024mixedcagesgirth6} considered only prime numbers, in this manuscript we allow $q$ to be a prime power.

The importance of these constructions lies in their ability to offer broad, general frameworks on the construction of mixed graphs of given girth. In addition, the constructions given in \cite{araujo2024mixedcagesgirth6} and those given in this paper offer the first established upper bounds of mixed cages of girth $6$.

To compare the values between the lower bounds for the mixed cages of girth $6$ and the upper bounds presented in ~\cite{araujo2024mixedcagesgirth6} and this paper, we provide Table \ref{tab:results} at the end of this paper.
The lower bounds consist of the AHM bound when $z=1$; otherwise, we use the bound derived in~\cite{araujo2024mixedcagesgirth6}, which extends the ideas from~\cite{araujo2019mixed} used to obtain the AHM bound. This second bound assumes that the $[z,0;g]$-mixed graph (which is, in fact, a directed graph) constructed in~\cite{behzad1970minimal} is a directed cage--that is, a directed graph of minimum order with in-degree and out-degree equal to~$z$ and directed girth~$g$.
Values marked with a $\star$ in the table indicate that the bound relies on this conjecture.

\section{Preliminaries \label{sec:preliminaries}}
To be self-contained, we describe in detail three graphs that are obtained from projective planes of order $q$. For an introduction, see e.g. \cite{balbuena2008incidence, van2001course, kiss2019finite, godsil2013algebraic}.

\paragraph{Incidence graph of the projective plane on the Galois field.}  Let $q$ be a prime power, and let $\mathbb{F}_q$ be the Galois field of order $q$. 
We describe the projective plane $PG(2; q)$.
Let $L_\infty$ and $P_\infty$ be the incident line and point in the projective plane, called the infinity line and infinity point, respectively.
The classes of lines (points)  are denoted by $L_i$ ($P_i$)  for $i \in \mathbb{F}_q$.
Each line $L$ has $q+1$ points and each point $P$ is incident to $q+1$ lines. 
Any line in $L_m$ described by $y = m x +b$  is denoted by $[m,b]$, and any point in $P_x$ is denoted by $(x,y)$. 
For each $i \in  \mathbb{F}_q$, the set of points incident to $L_i$ is $\{ (i,j) \mid j \in  \mathbb{F}_q\}$ and the set of lines incident to $P_i$ is $\{ [i,j] \mid j \in   \mathbb{F}_q \}$.
Finally, for $m,b \in  \mathbb{F}_q$ the line $[m,b]$ is incident to all points $(x,y)$ such that $y = m x +b$ holds for $x,y \in  \mathbb{F}_q$.

The projective plane $PG(2; q)$ has an incidence graph associated that we denote by $G_{(2,q)}$. Each line and point has a unique associated vertex $v \in V(G_{(2,q)}) $. 
Similarly, $uv \in E(G_{(2,q)})$ if and only if  their associated line and point are incident in  $PG(2; q)$. For an easy reading, we may refer to ``line $[m,b]$'' (point $(x,y)$) instead of ``vertex associated to the line $[m,b]$'' (point $(x,y)$). Note that $G_{(2,q)}$ has order $2q^2 + 2q + 2$, diameter 3 and girth 6. In fact, it is known that this incidence graph is a $[0,q+1;6]$-mixed cage (undirected cage) that attains the Moore bound.

Let us describe the incidence graph of an elliptic semiplane of type $L$. This graph is derived from the projective plane. We select a line and remove it along with all of its incident points. Next, we choose another point from the remaining points and delete it, along with all its incident lines. In particular, picking line $L_{0}$ and point $P_{0}$, we obtain the following graph.

\begin{defn}[Graph $G_q$]\label{def:graph_G_q}
    Let $\mathbb{F}_q^* =  \mathbb{F}_q \setminus \{ 0 \}$. 
    For a fixed $b,y \in \mathbb{F}_q$, let us denote the sets of vertices $\mathcal{L}_b \coloneq \{ [m,b] \mid m \in \mathbb{F}^*_q\}$,  $\mathcal{P}_y \coloneq \{ (x,y) \mid x \in \mathbb{F}^*_q \}$,  $\mathcal{L}_\infty \coloneq \{ L_i \mid i \in \mathbb{F}^*_q \}$, and $\mathcal{P}_\infty \coloneq \{ P_i \mid i\in \mathbb{F}^*_q\}$.
    Finally, we define $G_q \coloneq G_{(2,q)}[ \{ \mathcal{L}_b \mid b \in  \mathbb{F}_q\} \cup \{\mathcal{P}_y  \mid y \in \mathbb{F}_q \} \cup \{ \mathcal{L}_\infty , \mathcal{P}_\infty \}]$.
\end{defn}

This graph has $2(q-1)(q+1)$ nodes and girth 6.

\paragraph{Primitive element.}  A \emph{primitive element} $\xi \in \mathbb{F}_q^*$ is a multiplicative generator of the field $\mathbb{F}_q$, such that the non-zero elements of  $\mathbb{F}_q$ can be written as $\xi^i$ for some $i\in \{ 0, \dots, q-2 \} $, that is,  $ \mathbb{F}_q = \{  0, \xi^0, \xi^1, \xi^2, \dots, \xi^{q-2}\}$. This is due to the isomorphism between the multiplicative group $\mathbb{F}^*_q$ and the additive group $\mathbb{Z}_{q-1}$, as described by Abajo, Balbuena, Bendala, and Marcote~\cite{abajo2019improving}. For example, a primitive element of the Galois Field of order 8 is $\alpha$ ($\alpha$ a root of the irreducible polynomial $f(x)=x^3+x+1$ over $\mathcal{Z}_2$), and a primitive element of $\mathbb{F}_7$ is 2. See e.g.~\cite{jones1998elementary}.
 
\paragraph{The circulant digraph $\overset{\bm{\rightarrow}}{\bm{C}}_{\bm{q}} \bm{{(i_1, \dots, i_k)}}$.} 
Consider the field $\mathbb{Z}_q$ and $i_1, \dots, i_k \in \mathbb{Z}_q$. A \emph{circulant digraph} $\overset{\rightarrow}{C}_q(i_1, \dots, i_k)$ has a vertex $v_i$ for each $i \in \{ 0, 1, \dots, q -1 \}$. There exists an arc $(v_{a},v_{b})$ if and only if $b \equiv  a + i \mod q$ for some $i \in i_1, \dots, i_k$. It is known that if $q = z(g-1) + 1$, then $\overset{\rightarrow}{C}_q( 1, 2, \dots, z)$ is a $[z,0;g]$-mixed graph (see e.g.~\cite{behzad1970minimal, araujo2009dicages}). 

\section{A family of mixed graphs with girth 6\label{sec:new_family_girth_6}}
For the Galois Field $ \mathbb{F}_q $ where $ q \geq 7 $ is a prime power, we consider the multiplicative group. Let $ \xi \in \mathbb{F}^*_q $ be a primitive element. We examine the graph $ G_q $, which has a girth of 6 and contains $ 2q^2 - 2 $ nodes. We describe a mixed graph of girth 6, where the nodes are arranged according to their second coordinate. Each node has a corresponding copy, and we add a circulant between the vertices and their copies.

Let $ q $ be a prime power. We consider the bipartite graph $ G_q $ over $ \mathbb{Z}_q $, as described in Section~\ref{sec:preliminaries}. We take a copy of $ G_q $, denoted as $ G'_q $. For clarity, whenever $ v' \in V(G'_q) $ is referenced, we assume that $ v' = v $ — that is, $ v' $ represents the copy vertex of the original vertex $ v \in V(G_q) $; the same applies to the edges. The notations $ \mathcal{L}'_b, \mathcal{P}'_y, \mathcal{L}'_\infty, \mathcal{P}'_\infty $ for the copy elements follow the conventions established in Definition~\ref{def:graph_G_q}.

We will introduce a value $ p $, which refers to the length of the longest ``jump'' made in a circulant. The value of $ p $ depends on the parity of $ q $. The number of arcs within the circulants also depends on the parity of $ p $, which relates to the integrality required of these two numbers. Overall, our constructions exhibit slight variations in the number of arcs based on the parities of $ q $ and $ p $. We have three cases that primarily depend on the parity of $ q $.

\subsection{Construction when $q$ is even}
\label{sec:q_even}
Consider $q$ even. Let $p \in \mathbb{Z}$ be such that $2(q-1) = 4p +2$, or equivalently, $p = \frac{q-2}{2}$ which is an integer number because $q$ is an even prime power, or simply a power of $2$. It is important to note that $p$ is always odd. To see this, given that $q$ is a power of $2$, expressed as $2^\alpha$ for some positive integer $\alpha$ then $p = \frac{2(2^{\alpha - 1}-1)}{2} =2^{\alpha - 1}-1$.

In the following, all the computations are made modulo $q-1$ because calculations are performed in the first coordinate, where there are $q-1$ elements from the set $\mathbb{F}^* = \{ \xi^0, \xi^1, \xi^2, \dots, \xi^{q-2}\}$.

For a fixed $y \in \mathbb{F}_q$, we add circulants to the sets $\mathcal{P}_y \cup \mathcal{P}'_y$ and to $\mathcal{L}_\infty \cup \mathcal{L}'_\infty$. Let us explicitly describe the set of arcs to be added.

Let $i \in \{ 1,2, \dots, \frac{p+1}{2} \}$ --note that $\frac{p+1}{2}$ is an integer number as $p$ is an odd number.
For a jump of length $i$ and for each $x \in \mathbb{F}^*_q$ we add arcs as follows: from a point to its copy, we include the arc $\big( (x,y),({(x\xi^{i-1})'},y') \big)$  and from a copy to the original we add arc $\big( (x',y'),(x\xi^i,y) \big)$. For the infinity lines we add arcs 
$\big( L_x, L_{x\xi^{i-1}}' \big)$ and $\big( L_x', L_{x\xi^i} \big)$. 
Similarly for lines and the infinity points. For fixed $b \in \mathbb{F}_q$ and for jump $i$ we add arcs $\big( [m,b],[(m/\xi^{i})',b'] \big)$, $\big( [m',b'],[m/\xi^{i-1},b] \big)$ and $\big( P_m, P'_{m/\xi^{i}} \big)$, $\big( P_m', P_{m/\xi^{i-1}} \big)$ for each $m \in \mathbb{F}^*_q$. 

Note that in all cases, we add a circulant $\overset{\rightarrow}{C}_{q-1}(1,3,5,\dots, p)$ where the vertices $v_1, v_2 \dots, v_{2q -2}$ are arranged in increasing order based on their exponents, with the original vertices given priority.

Let us call this resulting mixed graph $H_{q,p}$. Figure~\ref{fig:example} shows an example of this constructions for $q=8$ and $p = 3$.
 
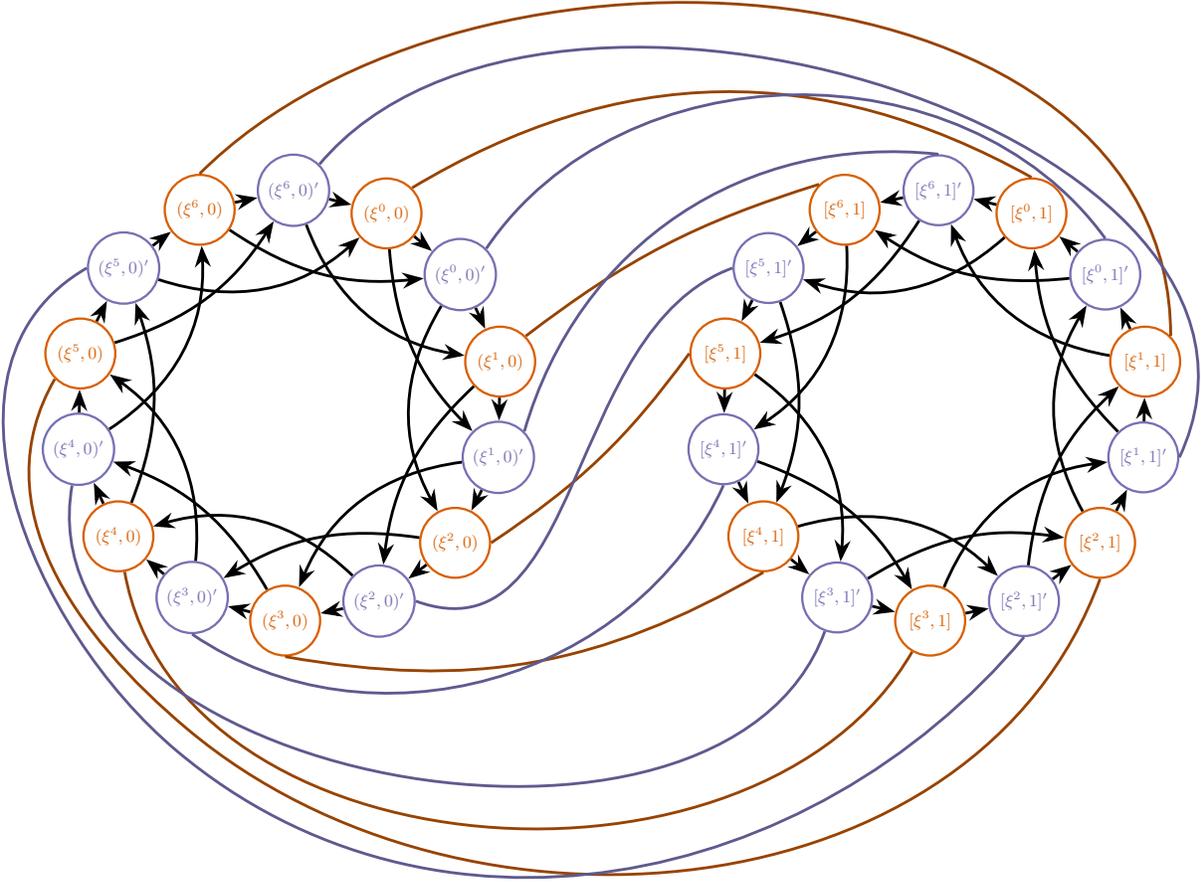
\begin{figure}[ht!]
    \centering
    \resizebox{\textwidth}{!}{
    \trimbox{40pt 40pt 60pt 40pt}{
    \begin{tikzpicture}[node distance=7em, 
        myArc/.style={draw,line width = 1.5pt, -{Stealth[length=4mm]}},
        myEdge/.style={line width = 1.5pt}, 
        stateColor/.style={circle,  minimum size=3.7em, draw, line width = 1.2pt,MyOrange},
        stateColorCopy/.style={circle,  minimum size=3.7em, draw, line width = 1.2pt,MyPurple}]   
       \node[minimum size={2*4cm},regular polygon,regular polygon
        sides=14,rotate=11.75] at (0,0) (14-gon) {};
        \foreach \p [count=\n from 0] in {7,...,1}{%
            \tikzmath{integer \x, \y;
            \x = 2*\p; \y = 2*\p-1;}
            \node[stateColorCopy] at (14-gon.corner \y) (copy\n) {\small$(\xi^{\n},0)'$};
            \node[stateColor] at (14-gon.corner \x) (point\n) {\small$(\xi^{\n},0)$};};
        \foreach \i in {0, ..., 6}{
            \tikzmath{integer \j1, \j2;
            \j1 = Mod(\i+1,7); \j2 = Mod(\i+2,7); }
            \draw[myArc] (point\i) to (copy\i);
            \draw[myArc] (copy\i) to (point\j1);
            \draw[myArc, bend right=20] (point\i) to (copy\j1);
            \draw[myArc, bend right=30] (copy\i) to (point\j2);
            }
        \node[minimum size={2*4cm}, regular polygon,regular polygon
        sides=14,rotate=11.75] at ($(14-gon) + (12,0)$) (14-gonLINE) {};
        \foreach \p [count=\n from 0] in {7,...,1}{%
            \tikzmath{integer \x, \y;
            \x = 2*\p; \y = 2*\p-1;}
            \node[stateColorCopy] at (14-gonLINE.corner \y) (LineCopy\n) {\small$[\xi^{\n},1]'$};
        \node[stateColor] at (14-gonLINE.corner \x) (line\n) {\small$[\xi^{\n},1]$};};
    
        \foreach \i in {0, ..., 6}{
            \tikzmath{integer \j1, \j2;
            \j1 = Mod(\i-1,7); \j2 = Mod(\i-2,7); }
            \draw[myArc] (line\i) to (LineCopy\j1);
            \draw[myArc] (LineCopy\i) to (line\i);
            \draw[myArc, bend left=30] (line\i) to (LineCopy\j2);
            \draw[myArc, bend left=20] (LineCopy\i) to (line\j1);
            }
         \foreach \i \j \bending in {0.north east/0.north/30, 
         3.south/4.south/-20,2.east/5.west/-10,1.north east/6.north west/10}{
            \draw[myEdge, bend left=\bending, MyOrange!70!black] (point\i) to (line\j);
            }
        \draw[myEdge,MyOrange!70!black] (point6.north) to  [out=45, in=90] (line1.north east);
        \draw[myEdge,MyOrange!70!black] (point4) to  [out=280, in=240] (line3);
        \draw[myEdge,MyOrange!70!black] (point5.south west) to  [out=240, in=140] ($(point5) + (2,-7)$) to  [out=140-180, in=250] (line2.south); 
          \foreach \i \j \bending in {0.north east/0.north/55, 
          3.south/4.south/-50,
          1.north east/6.north/40}{
            \draw[myEdge, bend left=\bending, MyPurple!80!black] (copy\i) to (LineCopy\j); 
            }
        \draw[myEdge,MyPurple!80!black] (copy6.north east) to  [out=50, in=65] (LineCopy1.east);
        \draw[myEdge,MyPurple!80!black] (copy5.west) to  [out=210, in=140] ($(copy5) + (1,-9)$) to  [out=140-180, in=230] (LineCopy2.south); 
        \draw[myEdge,MyPurple!80!black] (copy4) to  [out=260, in=250] (LineCopy3); 
        \draw[myEdge,MyPurple!80!black] (copy2.east) to  [out=-20, in=200] (LineCopy5.west); 
    \end{tikzpicture}
    }
    }
\caption{Set of vertices $\mathcal{P}_0$ and $\mathcal{L}_1$ of graph $H_{q,p}$ for $q=8$, $p=3$. This is the Galois field of order 8.}
\label{fig:example}
\end{figure}

Note that the circulants define a partition on the nodes as follows. Points are arranged according to their second coordinate, hence we have a partition with $2(q+1)$ parts, each part containing $2(q-1)$ elements. That is, we have part $\mathcal{P}_y\cup\mathcal{P}'_y$ for a fixed  $y \in \mathbb{F}_q$, part $\mathcal{L}_b\cup \mathcal{L}'_b$ for a fixed $b\in \mathbb{F}_q$, part $\mathcal{L}_\infty\cup \mathcal{L}'_\infty$ and part $\mathcal{P}_\infty\cup \mathcal{P}'_\infty$. Using this partition, it is not difficult to see that for each point $(x,y)$ (which belongs to part $\mathcal{P}_y$ for a fixed $y \in \mathbb{F}^*_q$), there exists a unique line $[\frac{y-b}{x},b]$ in part $\mathcal{L}_b$ for $b \in \mathbb{F}_q$; this is well-defined as $x\in \mathbb{F}^*_q$. Hence, for any fixed pair $y,b \in \mathbb{F}_q$ there is a matching between parts $\mathcal{P}_y$ and $\mathcal{L}_b$. The same holds for parts $\mathcal{P}_y$ for a fixed  $y \in \mathbb{F}_q$ and $\mathcal{L}_\infty$, and for parts $\mathcal{L}_b$ for a fixed $b\in \mathbb{F}_q$ and $\mathcal{P}_\infty$. Note that parts $\mathcal{L}_\infty$ and $\mathcal{P}_\infty$ share no edge. All of these holds also for the copies.

\begin{lem}\label{lem:directed_cycles}
    The circulants of the graph $H_{q,p}$ has directed girth 6 when $q$ is even.
\end{lem}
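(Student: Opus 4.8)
The plan is to reduce the statement to the fact recalled in Section~\ref{sec:preliminaries}, namely that $\overset{\rightarrow}{C}_n(1,2,\dots,z)$ has directed girth $g$ whenever $n=z(g-1)+1$. The first step is to record two structural facts about the arc set defined in Section~\ref{sec:q_even}. First, every arc added there joins an \emph{original} vertex to a \emph{copy} vertex: the arcs $\big((x,y),((x\xi^{i-1})',y')\big)$, $\big(L_x,L'_{x\xi^{i-1}}\big)$, $\big([m,b],[(m/\xi^{i})',b']\big)$ and $\big(P_m,P'_{m/\xi^{i}}\big)$ go from an original to a copy, and the remaining four families go from a copy to an original. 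Second, no arc joins vertices of two different parts of the partition $\{\mathcal{P}_y\cup\mathcal{P}'_y\}\cup\{\mathcal{L}_b\cup\mathcal{L}'_b\}\cup\{\mathcal{L}_\infty\cup\mathcal{L}'_\infty\}\cup\{\mathcal{P}_\infty\cup\mathcal{P}'_\infty\}$. Hence every directed cycle using only arcs is contained in a single part and alternates strictly between original and copy vertices; in particular its length is even, say $2\ell$.

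Next I would fix one part, say $\mathcal{P}_y\cup\mathcal{P}'_y$, and associate to each of its vertices its exponent in $\mathbb{Z}_{q-1}$ (so both $(\xi^{k},y)$ and $(\xi^{k},y)'$ map to $k$, and all exponent arithmetic is modulo $q-1$). A ``forward'' arc (original$\to$copy) with parameter $a\in\{1,\dots,\tfrac{p+1}{2}\}$ increases the exponent by $a-1$, and a ``return'' arc (copy$\to$original) with parameter $b\in\{1,\dots,\tfrac{p+1}{2}\}$ increases it by $b$; composing a forward arc with the return arc that follows it in a directed cycle therefore shifts the exponent by $a+b-1$, and as $a,b$ range over $\{1,\dots,\tfrac{p+1}{2}\}$ this takes exactly the values $1,2,\dots,p$. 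Consequently a directed cycle of length $2\ell$ in this part projects to a closed directed walk of length $\ell$ in the circulant $\overset{\rightarrow}{C}_{q-1}(1,2,\dots,p)$ on $\mathbb{Z}_{q-1}$, so $\ell$ is at least the directed girth of that circulant; conversely a shortest directed cycle of $\overset{\rightarrow}{C}_{q-1}(1,2,\dots,p)$, which visits $\ell$ distinct exponents, lifts to a genuine directed cycle of $H_{q,p}$ on the $2\ell$ distinct vertices obtained by taking the original and the copy of each visited exponent. Thus the directed girth of this circulant equals twice the directed girth of $\overset{\rightarrow}{C}_{q-1}(1,2,\dots,p)$. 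For the parts $\mathcal{L}_b\cup\mathcal{L}'_b$ and $\mathcal{P}_\infty\cup\mathcal{P}'_\infty$ the displacements are $-a$ and $-(b-1)$, so one lands instead in $\overset{\rightarrow}{C}_{q-1}(-1,-2,\dots,-p)$, which is isomorphic to $\overset{\rightarrow}{C}_{q-1}(1,2,\dots,p)$ via $x\mapsto -x$; for $\mathcal{L}_\infty\cup\mathcal{L}'_\infty$ the displacement pattern coincides with the point case. So in all cases the reduced circulant is (isomorphic to) $\overset{\rightarrow}{C}_{q-1}(1,2,\dots,p)$.

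Finally, since $q$ is even we have $p=\tfrac{q-2}{2}$ and hence $q-1=2p+1=p(3-1)+1$. Applying the result recalled in Section~\ref{sec:preliminaries} with $n=q-1$, $z=p$ and $g=3$ shows that $\overset{\rightarrow}{C}_{q-1}(1,2,\dots,p)$ is a $[p,0;3]$-mixed graph, i.e.\ its directed girth is exactly $3$. Combining this with the previous paragraph, the directed girth of each circulant of $H_{q,p}$ is exactly $6$, which proves the lemma.

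I expect the main difficulty to be bookkeeping rather than conceptual: making the $2$-to-$1$ identification of vertices with $\mathbb{Z}_{q-1}$ precise, verifying that all four arc families (points, lines, infinity lines, infinity points, together with their copies) really do reduce to an isomorphic copy of $\overset{\rightarrow}{C}_{q-1}(1,2,\dots,p)$, and — the one point that requires genuine care — checking that the projection/lift argument transfers the directed girth \emph{exactly}: one must confirm both that no directed $2$-cycle or $4$-cycle exists (a projected closed walk of length $1$ or $2$ would contradict girth $3$ of the reduced circulant, using $1\le p<q-1$) and that a minimum directed cycle of the reduced circulant lifts to a cycle with no repeated vertices.
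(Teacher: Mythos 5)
Your proof is correct, but it takes a genuinely different route from the paper's. The paper works directly inside the $2(q-1)$-vertex circulant: it exhibits an explicit directed $6$-cycle (four jumps of length $p$ followed by two of length $1$), rules out odd lengths via the original/copy bipartition, and rules out length $4$ by the single inequality $4p=2(q-1)-2<2(q-1)$. You instead quotient each part by the two-to-one map ``vertex $\mapsto$ exponent,'' observe that a forward--return pair of arcs shifts the exponent by $a+b-1$, which ranges exactly over $\{1,\dots,p\}$, and thereby identify the quotient with $\overset{\rightarrow}{C}_{q-1}(1,2,\dots,p)$, whose directed girth is $3$ because $q-1=2p+1=p(3-1)+1$ by the fact recalled in Section~\ref{sec:preliminaries}. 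Your route buys a structural explanation of where girth $6$ comes from (the circulant with odd jumps $1,3,\dots,p$ on $2(q-1)$ vertices is a girth-doubling two-fold cover of $\overset{\rightarrow}{C}_{q-1}(1,\dots,p)$) and reuses a cited result; the paper's argument is shorter and self-contained. At bottom the two computations coincide: your bound $\ell p\ge 2p+1$ for a closed $\ell$-walk in the quotient is the paper's $4p<4p+2$ divided by two.

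One small inaccuracy in your lifting direction: the $2\ell$ vertices of the lifted cycle are not ``the original and the copy of each visited exponent.'' The copy reached from the original at exponent $k_j$ sits at exponent $k_j+a_j-1$, which differs from $k_j$ unless $a_j=1$, and you cannot always take $a_j=1$ (that would force $b_j=s_j$, which exceeds $\tfrac{p+1}{2}$ when $s_j=p$). Distinctness of the $\ell$ copy vertices still holds, since the $j$-th copy's exponent lies in the arc traversed between $k_j$ and $k_{j+1}$ and these arcs are pairwise disjoint for a shortest cycle of the quotient; alternatively, simply exhibit the explicit $6$-cycle as the paper does. This is cosmetic and does not affect the correctness of the argument.
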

\begin{proof}
    Consider the circulant of part $\mathcal{P}_y \cup \mathcal{P}'_y$ for a fixed $y \in \mathbb{F}_q$. We know there exists a directed cycle of length $6$, e.g. for $x \in\mathbb{F}^*$, we obtain the following directed cycle by giving 4 jumps of length $p$, or equivalently for $i=\frac{p+1}{2}$, and then 2 jumps of length $i=1$: $\left( (x,y), ({x\xi^{\frac{p-1}{2}}} ',y'),
    (x\xi^{p},y), ({x\xi^{\frac{3p-1}{2}}}' ,y'), (x\xi^{2p},y), ({x\xi^{2p}}',y') \right)$ which is well defined as there is an arc from $({x\xi^{2p}}',y')$ to $(x,y)=(x\xi^{2p + 1},y)$ --recall that $2(q-1) = 4p + 2$. 
    
    We claim that there is no smaller directed cycle than this type. Let $K$ be a directed cycle of smallest length.
    First, note that $\mathcal{P}_y \cup \mathcal{P}'_y$ is bipartite (in the directed subgraph), and thus the length of $K$ cannot be 5 or 3. Clearly $|K|=2$ is not possible by construction. 
    We show that $|K| \neq 4$.
    Suppose we start at position $(x, y)$. For these computations, recall that the circulant is of the form $\overset{\rightarrow}{C}_{q-1}(1, 3, 5, \ldots, p)$, meaning we only allow jumps of odd lengths. Since $p$ represents the length of the longest jump, we can see that by making four jumps of length $p$, we obtain the equation $4p = 2(q-1) - 2 < 2(q-1)$. This indicates that we are two positions away from the origin $(x, y)$, even after making four jumps of the maximum length. Consequently, this establishes that $|K| = 6$ based on the cycle shown above.
    
    Similarly for the other parts.
\end{proof}

\begin{lem}\label{lem:girth}
    The graph $H_{q,p}$ has girth 6 when $q$ is even and $p$ is odd.
\end{lem}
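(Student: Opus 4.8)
The plan is to prove the two inequalities $\mathrm{girth}(H_{q,p})\le 6$ and $\mathrm{girth}(H_{q,p})\ge 6$ separately. The first is immediate: $G_q$ is a subgraph of $H_{q,p}$ and has girth $6$ (as recorded right after Definition~\ref{def:graph_G_q}), so $H_{q,p}$ contains a $6$-cycle. For the second I would first show that the underlying undirected graph of $H_{q,p}$ is bipartite, hence has no odd cycle. Each super-part carries, on its $2(q-1)$ vertices, a circulant whose connection lengths $1,3,\dots,p$ are all odd (this is where the hypothesis ``$p$ odd'' enters), so that circulant is bipartite with one colour class the originals and the other the copies; since moreover every edge of $G_q$ joins an original point to an original line and every edge of $G'_q$ joins a copy point to a copy line, the partition with $\{\text{original points}\}\cup\{\text{copy lines}\}$ on one side and $\{\text{original lines}\}\cup\{\text{copy points}\}$ on the other is a proper $2$-colouring of all of $H_{q,p}$. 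Consequently $\mathrm{girth}(H_{q,p})$ is even. A $2$-cycle is impossible, since parallel edges and arcs are forbidden and two arcs of one circulant have connection lengths at most $p=\tfrac{q-2}{2}$, so their sum lies between $2$ and $2p<2(q-1)$ and cannot vanish modulo $2(q-1)$. Hence it suffices to show $H_{q,p}$ has no $4$-cycle, which then forces girth $=6$.

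Let $C$ be a $4$-cycle. It cannot consist only of arcs, or it would be a directed $4$-cycle inside a single circulant, contradicting Lemma~\ref{lem:directed_cycles}; and it cannot consist only of edges, or it would be a $4$-cycle in $G_q$ or $G'_q$, contradicting that those have girth $6$. So $C$ has both arcs and edges. Collapsing super-parts turns $C$ into a closed walk in the ``super-part graph'', which is bipartite because edges only join point super-parts to line super-parts; thus $C$ has an even number of edges, i.e.\ exactly two arcs and two edges. The two arcs cannot be consecutive in $C$: if $C=w_0w_1w_2w_3$ with $w_0w_1$ and $w_1w_2$ arcs, then $w_0,w_1,w_2$ all lie in one super-part and (since arcs run between originals and copies) $w_0$ and $w_2$ lie on the same side, while the edges $w_2w_3$ and $w_3w_0$ would make the single vertex $w_3$ edge-adjacent to both $w_0$ and $w_2$, contradicting the point--line matching property recorded after Definition~\ref{def:graph_G_q}. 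Hence the two arcs are opposite, sitting in two distinct super-parts; since every edge of $H_{q,p}$ joins a point-vertex to a line-vertex, one of those super-parts is a point super-part and the other a line super-part. Up to traversal direction (and up to exchanging the roles of $G_q$ and $G'_q$) we are left with a single shape: a cycle $P\,\tilde P\,\tilde L\,L$ in which $P$ is an original point incident to the original line $L$, $\tilde P$ a copy point in the super-part of $P$, $\tilde L$ a copy line in the super-part of $L$, $\tilde P$ incident to $\tilde L$ in $G'_q$, one arc joining $P$ to $\tilde P$ inside a point-circulant, and one arc joining $\tilde L$ to $L$ inside a line-circulant, both arcs traversable in the forward direction.

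Finally I would carry out the exponent bookkeeping for this shape. In the generic case $P=(x,y)$, $L=[m,b]$, incidence gives $mx=y-b$, and incidence of $\tilde P=(\tilde x,y)$ with $\tilde L=[\tilde m,b]$ in $G'_q$ gives $\tilde m\tilde x=y-b=mx$; reading $\tilde x=x\xi^{a}$ and $\tilde m=m\xi^{c}$ off the two arc maps of the construction (with $a,c$ restricted by the forced traversal directions to windows of length $\tfrac{p+1}{2}=\tfrac{q}{4}$, placed so that $a+c$ runs over $\{1,2,\dots,\tfrac{q}{2}-1\}$), we obtain $\xi^{a+c}=1$, i.e.\ $a+c\equiv 0\pmod{q-1}$; this is impossible since $1\le a+c\le\tfrac{q}{2}-1<q-1$. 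The cases where $P\in\mathcal{P}_\infty$ and/or $L\in\mathcal{L}_\infty$ are handled by the same computation, using the arc maps on $\mathcal{L}_\infty\cup\mathcal{L}'_\infty$ and $\mathcal{P}_\infty\cup\mathcal{P}'_\infty$ and the incidences $x=i$, $m=j$; the case $P\in\mathcal{P}_\infty$, $L\in\mathcal{L}_\infty$ does not arise, because $\mathcal{P}_\infty$ and $\mathcal{L}_\infty$ share no edge. This rules out every $4$-cycle, and together with the parity argument we conclude $\mathrm{girth}(H_{q,p})=6$.

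The main obstacle, I expect, is the structural reduction in the second paragraph: one must enumerate all ways to thread two arc-steps and two edge-steps into a $4$-cycle, determine in each case the direction in which each arc is forced to be traversed, and use the point--line matching property to eliminate the degenerate threadings, arriving at the single shape $P\,\tilde P\,\tilde L\,L$. The subsequent exponent computation is short, but it is orientation-sensitive: it only yields a contradiction because the point- and line-circulants are set up so that the admissible values of $a+c$ form an interval contained in $\{1,\dots,q-2\}$, i.e.\ one that misses $0$ modulo $q-1$; this is exactly the point one has to keep track of when substituting the arc maps.
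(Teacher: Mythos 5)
Your reduction to the single shape $P\,\tilde P\,\tilde L\,L$ is sound and in fact more thorough than the paper's own argument, which, after establishing bipartiteness and invoking the matching between parts exactly as you do, examines only one of the two traversal orientations of that quadrilateral (the one with an arc $L\to\tilde L$ and an arc $\tilde P\to P$). The gap in your proposal sits at precisely the point you yourself flag as orientation-sensitive: the claim that the two exponent windows are ``placed so that $a+c$ runs over $\{1,\dots,\tfrac q2-1\}$'' is asserted rather than read off the arc maps, and for the other orientation it is false. If the arcs are traversed as $P\to\tilde P$ and $\tilde L\to L$, the construction gives $\tilde x=x\xi^{\,i-1}$ with $i-1\in\{0,\dots,\tfrac{p-1}{2}\}$ and $L=[\tilde m/\xi^{\,j-1},b]$ with $j-1\in\{0,\dots,\tfrac{p-1}{2}\}$, so both windows start at $0$ and $a+c=0$ is attainable. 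Concretely, taking $i=j=1$, the construction contains the arc $\big((x,y),(x',y')\big)$ from each point to its own copy and the arc $\big([m',b'],[m,b]\big)$ from each copy line to its own original; hence whenever $y=mx+b$ the closed walk $\big((x,y),(x',y'),[m',b'],[m,b],(x,y)\big)$ traverses two forward arcs and two edges and is a mixed $4$-cycle. This configuration is visible already in Figure~\ref{fig:example}: $(\xi^0,0)\to(\xi^0,0)'$, the copy edge to $[\xi^0,1]'$, the arc $[\xi^0,1]'\to[\xi^0,1]$, and the edge back to $(\xi^0,0)$, using $0=\xi^0\cdot\xi^0+1$ in $\mathbb{F}_8$.

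So the failing step is not a mere omission you could patch with a finer case analysis of the stated construction: the $i=1$ arcs make the two zero-shift windows overlap, and the only reason the paper's proof reaches the opposite conclusion is that it checks solely the orientation in which the shifts are $\xi^{-j}$ and $\xi^{\,i}$ with $i,j\ge 1$, where the total exponent indeed lies in $\{2,\dots,p+1\}$ and cannot vanish modulo $q-1$. To make either your argument or the paper's go through, the construction itself has to be adjusted so that, for every admissible pair of forward arcs around the quadrilateral, the net exponent shift avoids $0$ modulo $q-1$ (for instance by offsetting one of the two families of arcs so that the relevant windows are $\{1,\dots,\tfrac{p+1}{2}\}$ on both sides, as your final paragraph implicitly assumes). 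As written, your concluding computation papers over the one case that actually occurs.
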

\begin{proof}
    The graph has undirected girth equals to 6 by construction (two copies of graph $G_q$), and directed girth equals 6 by Lemma~\ref{lem:directed_cycles}.
    We verify that any mixed cycle has a length at least 6.

    Let $K$ be the smallest mixed cycle of the graph.    
    Based on the partition and the matching among parts discussed earlier, it is clear that no mixed cycles of lengths 2 and 3 exist. Also, $|K| \neq 5$ because the graph is bipartite with parts $\bigcup_b \{ \mathcal{L}_b \} \bigcup_y \{\mathcal{P}'_y\}  \bigcup\{ \mathcal{L}_\infty\}  \bigcup\{ \mathcal{P}'_\infty\}$ and $\bigcup_b \{ \mathcal{L'}_b \} \bigcup_y \{\mathcal{P}_y\}  \bigcup\{ \mathcal{L}'_\infty\}  \bigcup\{ \mathcal{P}_\infty\}$.    
    We show that $|K|\neq 4$. As we only have arcs between lines (points) that share the first same coordinate (one in vertex is in $G_q$ and the other in $G'$), in order to have a mixed cycle of length 4, we need two edges: one in $B_q$ and one in $B'_q$. Thus, we require two arcs, one leaving $B_q$ (and entering $B'_q$) and one entering it  (and leaving $B'_q$).        
    Consider the point $(x,y)$ which is edge-adjacent to line $[m,b]$. This line $[m,b]$ is arc-adjacent to a line in $G'_q$ of the form $[(m/\xi^{j})',b']$ for some $j \in \{1 , 2, \dots, \frac{p-1}{2} \}$. The line $[({m/\xi^j})',b']$ is edge-adjacent to a point $(x', ({m/\xi^j}x + b)' )$. Even if $y= ({m/\xi^j}x + b)'$, there is no arc from $(x', ({m\xi^j}x + b)' )$ to $(x,y)$ by construction. We conclude that there is no cycle of length 4. This concludes the proof.
\end{proof}

\begin{thm}
    For all even prime power $q$ and $p$ such that $2(q-1)=4p+2$, the graph $H_{q,p}$ is a $[\frac{p+1}{2},q;6]$-mixed graph.
\end{thm}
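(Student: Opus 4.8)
The plan is to verify, one at a time, the three defining properties of a $[\tfrac{p+1}{2},q;6]$-mixed graph: girth exactly $6$, $q$-regularity of the undirected part, and in- and out-degree exactly $\tfrac{p+1}{2}$ in the directed part. The girth is already in hand: since $q$ is an even prime power, the integer $p=\tfrac{q-2}{2}$ is odd (as noted right after the construction), so $2(q-1)=4p+2$ holds and Lemma~\ref{lem:girth} applies verbatim, giving girth $6$. So the real work is the two regularity counts.

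For the undirected degrees I would return to the incidence graph $G_{(2,q)}$ of $PG(2,q)$ and check, for each of the four types of vertex that survives in $G_q$, that deleting the antiflag $\{L_0,P_0\}$ together with everything incident to it destroys \emph{exactly one} of its $q+1$ original incidences. A finite line $[m,b]$ with $m\in\mathbb{F}_q^{*}$ loses only the point $(0,b)$ (its unique point lying on the deleted line $L_0$), keeping $P_m$ and the $q-1$ points $(x,mx+b)$ with $x\neq 0$; a finite point $(x,y)$ with $x\in\mathbb{F}_q^{*}$ loses only the line $[0,y]$ (its unique line through the deleted point $P_0$), keeping $L_x$ and the $q-1$ lines $[m,y-mx]$ with $m\neq 0$; a vertex $L_i$ with $i\in\mathbb{F}_q^{*}$ loses only its incidence with the deleted point $P_\infty$, keeping the $q$ points $(i,j)$; and a vertex $P_i$ with $i\in\mathbb{F}_q^{*}$ loses only its incidence with the deleted line $L_\infty$, keeping the $q$ lines $[i,j]$. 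In every case the degree in $G_q$ is $q$, so $G_q$ is $q$-regular; since $H_{q,p}$ is the disjoint union of $G_q$ and $G_q'$ with only arcs added between them, every vertex of $H_{q,p}$ meets exactly $q$ edges.

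For the arcs I would use the partition already recorded in the text: $V(H_{q,p})$ splits into the $2(q+1)$ parts $\mathcal{P}_y\cup\mathcal{P}_y'$ $(y\in\mathbb{F}_q)$, $\mathcal{L}_b\cup\mathcal{L}_b'$ $(b\in\mathbb{F}_q)$, $\mathcal{L}_\infty\cup\mathcal{L}_\infty'$ and $\mathcal{P}_\infty\cup\mathcal{P}_\infty'$, each of size $2(q-1)$, and by construction no arc joins two different parts. It therefore suffices to show that the arcs added inside one part form the circulant $\overset{\rightarrow}{C}_{2(q-1)}(1,3,5,\dots,p)$ of the construction. For a point part, list its vertices in the interleaved order $(\xi^{0},y),(\xi^{0},y)',(\xi^{1},y),(\xi^{1},y)',\dots$, giving $(\xi^{k},y)$ label $2k$ and $(\xi^{k},y)'$ label $2k+1$ modulo $2(q-1)$; then the arc $\big((x,y),((x\xi^{i-1})',y')\big)$ is the step $2k\mapsto 2k+(2i-1)$ and the arc $\big((x',y'),(x\xi^{i},y)\big)$ is the step $2k+1\mapsto 2k+(2i-1)$, so both are jumps of odd length $2i-1$, and as $i$ ranges over $\{1,\dots,\tfrac{p+1}{2}\}$ these lengths are exactly $1,3,\dots,p$. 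The line parts yield the same circulant up to negating the connection set (which merely reverses arc directions and is irrelevant to degrees), and the two infinity parts are literally of this form. Hence each vertex has exactly $|\{1,3,\dots,p\}|=\tfrac{p+1}{2}$ out-arcs and $\tfrac{p+1}{2}$ in-arcs, and since it lies in a unique part and arcs stay within parts, those are all of its arcs. Combining the three items gives the theorem.

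The main obstacle is the undirected count: one must be scrupulous that each vertex type loses precisely one incidence under the deletion — in particular that $L_i$ and $P_i$ each lose a genuine incidence rather than none or two — since an off-by-one there would break $q$-regularity. The arc bookkeeping is a routine relabelling, essentially already carried out in the proof of Lemma~\ref{lem:directed_cycles}.
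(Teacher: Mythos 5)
Your proof is correct and follows the same route as the paper, whose entire proof reads ``The parameters follow by construction and by Lemma~\ref{lem:girth}'': you invoke Lemma~\ref{lem:girth} for the girth and then read the degrees off the construction. Your incidence count for the $q$-regularity of $G_q$ and the relabelling showing each part carries the circulant $\overset{\rightarrow}{C}_{2(q-1)}(1,3,\dots,p)$ are exactly the details the paper leaves implicit, and both check out.
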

\begin{proof}
    The parameters follow by construction and by Lemma~\ref{lem:girth}.
\end{proof}

This gives us an upper bound on the order of a mixed-cage of order $6$ with specific parameters.
\begin{cor}\label{cor:order_even}
    Let  $q$ be an even prime power.
    Let $n[z,r;6]$ be the minimum order of a $n[z,r;6]$-mixed cage. Then, $n[\frac{q}{4},q;6]\leq 4q^2-4$.
\end{cor}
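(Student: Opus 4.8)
The plan is to derive Corollary~\ref{cor:order_even} directly from the theorem preceding it, so the task is almost entirely bookkeeping: translate the parameters $[\frac{p+1}{2},q;6]$ of $H_{q,p}$ into the notation $n[z,r;6]$ and count the vertices. First I would recall that $p=\frac{q-2}{2}$ for an even prime power $q$, so that $\frac{p+1}{2}=\frac{(q-2)/2+1}{2}=\frac{q}{4}$; since $q=2^\alpha$ this is indeed an integer (with $\alpha\ge 3$, which holds because we assumed $q\ge 7$). By the theorem, $H_{q,p}$ is a $[\frac{q}{4},q;6]$-mixed graph, so its order is an upper bound for $n[\frac{q}{4},q;6]$.

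Next I would count $|V(H_{q,p})|$. The graph consists of two disjoint copies $G_q$ and $G'_q$ of the incidence graph of the elliptic semiplane of type $L$, with only arcs (and no new vertices) added between them. As stated in Definition~\ref{def:graph_G_q} and the line following it, $G_q$ has $2(q-1)(q+1)=2(q^2-1)=2q^2-2$ vertices. Hence $|V(H_{q,p})|=2\cdot(2q^2-2)=4q^2-4$. Combining this with the bound from the previous paragraph gives $n[\frac{q}{4},q;6]\le 4q^2-4$, as claimed.

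The only thing that needs a sentence of care is the divisibility claim $\frac{q}{4}\in\mathbb{Z}$: this uses that $q$ is an even prime power, equivalently a power of $2$, and that $q\ge 7$ forces $q=2^\alpha$ with $\alpha\ge 3$, so $4\mid q$. (The excerpt already established $p=2^{\alpha-1}-1$, from which $\frac{p+1}{2}=2^{\alpha-2}=\frac{q}{4}$ follows.) I do not anticipate any real obstacle here; the substance of the argument — that $H_{q,p}$ has girth $6$ and the stated regularity — is entirely contained in Lemma~\ref{lem:girth} and the theorem, which I am free to invoke. If I wanted to be maximally explicit I would also note that the $r=q$ part of the parameter triple is immediate since each vertex keeps exactly its $q$ edges from $G_q$ (each point/line of the semiplane of type $L$ has degree $q$), and the $z=\frac{q}{4}$ part comes from the circulant $\overset{\rightarrow}{C}_{q-1}(1,3,5,\dots,p)$ contributing $\frac{p+1}{2}$ in-arcs and $\frac{p+1}{2}$ out-arcs at every vertex.
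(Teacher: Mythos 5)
Your proposal is correct and matches the paper's (implicit) derivation: the corollary follows immediately from the preceding theorem by substituting $p=\frac{q-2}{2}$ to get $\frac{p+1}{2}=\frac{q}{4}$ and counting $|V(H_{q,p})|=2\cdot 2(q-1)(q+1)=4q^2-4$. The extra remarks on integrality of $\frac{q}{4}$ and on where the $z$ and $r$ parameters come from are accurate but not needed beyond what the theorem already provides.
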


\subsection{Construction when $q$ is odd}
\label{sec:q_odd}
Consider $q$ odd. Let $p \in \mathbb{Z}$ be such that $p=\frac{q-1}{2}-1$, which is an integer number because $q$ is odd. 

If $p$ is odd, then the construction of the graph $H_{q,p}$ is the same.
Otherwise, when $p$ is even we add circulant $\overset{\rightarrow}{C}_{q-1}(1,3,5,\dots, p-1)$ to each of the parts.
Note that there there is no cycle of length 4. If we consider four jumps of length $p-1$ from an starting vertex $v$, then $4(\frac{q-1}{2}-2) = 2(q-1) - 8 < 2(q-1)$, implying that we are eight steps away from $v$. As we give only odd jumps, we need at least two more jumps to reach $v$. Hence, $p$ being the longest jump is well defined.

The rest of the proofs follow in the same way as Section~\ref{sec:q_even}.

\begin{cor}
    Let $q$ be an odd prime power and let $p=\frac{q-1}{2}-1$. 
    If $p$ is odd, then the graph $H_{q,p}$ is a $[\frac{p+1}{2},q;6]$-mixed graph.
    If $p$ is even, then the graph $H_{q,p}$ is a $[\frac{p}{2},q;6]$-mixed graph.
\end{cor}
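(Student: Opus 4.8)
The plan is to replay the proof of Section~\ref{sec:q_even} --- Lemmas~\ref{lem:directed_cycles} and~\ref{lem:girth} and the theorem that follows them --- monitoring the only two places where odd $q$ changes the input: the value $p=\frac{q-1}{2}-1$, and the fact that when $p$ is even each part of $H_{q,p}$ carries the truncated circulant $\overset{\rightarrow}{C}_{2(q-1)}(1,3,\dots,p-1)$ rather than $\overset{\rightarrow}{C}_{2(q-1)}(1,3,\dots,p)$. First I would pin down the degree parameters. The undirected part of $H_{q,p}$ is the disjoint union $G_q\sqcup G'_q$; since deleting a non-incident point--line pair from $PG(2,q)$ leaves every surviving vertex incident to exactly $q$ surviving vertices of the other kind, $G_q$ is $q$-regular, hence so is the undirected part of $H_{q,p}$, which gives $r=q$. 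Each vertex lies in exactly one part of the partition $\{\mathcal{P}_y\cup\mathcal{P}'_y\},\{\mathcal{L}_b\cup\mathcal{L}'_b\},\mathcal{L}_\infty\cup\mathcal{L}'_\infty,\mathcal{P}_\infty\cup\mathcal{P}'_\infty$, and inside that part it acquires exactly one in-arc and one out-arc per admissible jump $i$; the number of admissible jumps is $|\{1,3,\dots,p\}|=\frac{p+1}{2}$ when $p$ is odd and $|\{1,3,\dots,p-1\}|=\frac{p}{2}$ when $p$ is even, which is exactly the claimed $z$. So it remains only to check that the girth equals $6$.

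Since $G_q$ has girth $6$, the graph $H_{q,p}$ contains a $6$-cycle, so it suffices to rule out cycles of length $2,3,4,5$. Lengths $3$ and $5$ are impossible because, exactly as in the proof of Lemma~\ref{lem:girth}, once every arc is viewed as an edge the graph $H_{q,p}$ is bipartite, and that bipartition does not depend on the parity of $q$. Length $2$ is ruled out by the simplicity convention together with the fact that every circulant arc preserves the vertex type (affine point, affine line, infinite point, infinite line) and never joins a point to a line. For length $4$ I would argue as in Lemmas~\ref{lem:directed_cycles} and~\ref{lem:girth}. An all-edge $4$-cycle is impossible because $G_q\sqcup G'_q$ has girth $6$. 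An all-arc $4$-cycle is impossible because those four arcs would lie inside a single part's circulant, whose connection set consists of odd numbers bounded by $p$ (resp.\ $p-1$), and $4p=2(q-1)-4<2(q-1)$ (resp.\ $4(p-1)=2(q-1)-8<2(q-1)$), so four forward jumps cannot return to their starting position --- this is precisely the arithmetic already recorded at the end of Section~\ref{sec:q_odd}. Hence a $4$-cycle has exactly two edges and two arcs; since every arc crosses between $G_q$ and $G'_q$ while edges do not, and the number of crossings around a cycle is even, the two edges lie one in $G_q$ and one in $G'_q$, and the two arcs join them across the copy. Tracing the first coordinates along the two arcs and imposing the two incidences forces the net $\xi$-exponent shift of the two arcs to be $\equiv 0\pmod{q-1}$; since each arc shifts the exponent by at most $\frac{p+1}{2}$ in absolute value and $p+1=\frac{q-1}{2}<q-1$, this shift is $0$, and one then checks that no pair of circulant arcs realising zero net shift is compatible with the construction, a contradiction. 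Therefore $H_{q,p}$ has girth exactly $6$, and together with the degree count it is a $[\frac{p+1}{2},q;6]$-mixed graph when $p$ is odd and a $[\frac{p}{2},q;6]$-mixed graph when $p$ is even.

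I expect the decisive point to be this last one --- excluding a mixed $4$-cycle once the net exponent shift has been forced to $0$ --- which is where one must invoke the deliberate asymmetry in the construction between the point-arcs $((x,y),((x\xi^{i-1})',y'))$, $((x',y'),(x\xi^i,y))$ and the line-arcs $([m,b],[(m/\xi^i)',b'])$, $([m',b'],[m/\xi^{i-1},b])$: it is this mismatch that must prevent an incident point--line pair in $G_q$ from being reconnected through its copy in $G'_q$ by two arcs. A purely clerical remaining step, already dealt with at the end of Section~\ref{sec:q_odd}, is to re-verify the displayed inequalities with the shortened connection set when $p$ is even, and to observe that truncating a circulant can only destroy cycles, so that neither the girth nor the order $|V(H_{q,p})|=4q^2-4$ is affected.
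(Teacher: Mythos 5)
Your proposal is correct and follows essentially the same route as the paper: the paper's own justification for this corollary is simply the remark that "the rest of the proofs follow in the same way as Section 2.1," together with the arithmetic check $4(p-1)=2(q-1)-8<2(q-1)$ for the truncated circulant when $p$ is even, which is exactly the replay of Lemmas~\ref{lem:directed_cycles} and~\ref{lem:girth} that you carry out. If anything, your write-up is more explicit than the paper's (the degree count, the bipartiteness argument, and the exponent-shift analysis for mixed $4$-cycles are all spelled out rather than deferred), but the underlying argument is the same.
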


\begin{cor}
    Let  $q$ be an odd prime power.
    Let $n[z,r;6]$ be the minimum order of a $n[z,r;6]$-mixed cage. 
    Then, if $\frac{q-3}{2}$ is odd then
    $n[\frac{q-1}{4},q;6]\leq 4q^2-4$, otherwise $n[\frac{q-3}{4},q;6]\leq 4q^2-4$
\end{cor}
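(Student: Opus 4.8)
The final corollary is a direct bookkeeping consequence of the preceding corollary (the one identifying $H_{q,p}$ as a $[\frac{p+1}{2},q;6]$- or $[\frac{p}{2},q;6]$-mixed graph) together with the vertex count of $H_{q,p}$. The plan is therefore to translate the two cases of the previous corollary into statements about $n[z,r;6]$ by (i) fixing $z$ via the value of $p=\frac{q-1}{2}-1$, and (ii) recording the order of $H_{q,p}$.

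First I would record the order. The graph $G_q$ has $2(q-1)(q+1)=2q^2-2$ vertices (stated right after Definition~\ref{def:graph_G_q}), and $H_{q,p}$ consists of $G_q$ together with a disjoint copy $G_q'$, so $|V(H_{q,p})| = 2\cdot(2q^2-2) = 4q^2-4$. Since by definition $n[z,r;6]$ is the minimum order over all $[z,r;6]$-mixed graphs, the mere existence of $H_{q,p}$ with these parameters gives $n[z,q;6]\le 4q^2-4$ for the appropriate $z$.

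Next I would split on the parity of $p$, equivalently on the parity of $\frac{q-3}{2}=\frac{q-1}{2}-1=p$. If $\frac{q-3}{2}$ is odd, then $p$ is odd, the previous corollary gives that $H_{q,p}$ is a $[\frac{p+1}{2},q;6]$-mixed graph, and $\frac{p+1}{2}=\frac12\left(\frac{q-1}{2}\right)=\frac{q-1}{4}$; hence $n[\frac{q-1}{4},q;6]\le 4q^2-4$. If $\frac{q-3}{2}$ is even, then $p$ is even, the previous corollary gives a $[\frac{p}{2},q;6]$-mixed graph, and $\frac{p}{2}=\frac12\left(\frac{q-1}{2}-1\right)=\frac{q-3}{4}$; hence $n[\frac{q-3}{4},q;6]\le 4q^2-4$. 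In both cases $r=q$ edges, matching the statement.

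There is essentially no obstacle here beyond making sure the two parity cases are dispatched correctly and that the arithmetic $\frac{p+1}{2}=\frac{q-1}{4}$ and $\frac{p}{2}=\frac{q-3}{4}$ is carried out without sign errors; all the substantive work (that $H_{q,p}$ has girth exactly $6$ and the claimed degrees) has already been done in Section~\ref{sec:q_even} and carries over to the odd case as noted, and the order computation is immediate from the construction. So the "hard part" is purely notational care in matching $p$, $z$, and the parity hypothesis.
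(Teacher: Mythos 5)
Your proposal is correct and matches the paper's (implicit) argument: the paper states this corollary without a separate proof precisely because it follows, exactly as you describe, from the preceding corollary on the parameters of $H_{q,p}$ together with the order count $|V(H_{q,p})| = 2(2q^2-2) = 4q^2-4$. Your parity bookkeeping via $p=\frac{q-3}{2}$ and the arithmetic $\frac{p+1}{2}=\frac{q-1}{4}$, $\frac{p}{2}=\frac{q-3}{4}$ are all correct.
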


\section*{Final discussion}
As mentioned in the introduction, in this paper we allow $q$ to be any prime power, whereas the construction in~\cite{araujo2024mixedcagesgirth6} considers only prime values of~$q$. It is important to highlight that the two constructions may differ in the number of arcs. For instance, when $q = 7$, the construction in~\cite{araujo2024mixedcagesgirth6} yields a mixed graph with 2 arcs, while the construction presented here produces a mixed graph with only 1 arc. Even when both constructions yield the same parameters, the one described here might differ slightly in terms of order. For example, when $q = 13$, our construction generates a mixed graph with 672 vertices, while the construction in \cite{araujo2024mixedcagesgirth6} results in 676 vertices. These differences are also reflected in Table~\ref{tab:results}.

Table~\ref{tab:results} summarizes the current best-known lower and upper bounds for $[z,r;6]$-mixed graphs, for $0 \leq z \leq 5$ and $2 \leq r \leq 19$. The lower bounds include the AHM bound for $z = 1$, and the conjecture-dependent bound from~\cite{araujo2024mixedcagesgirth6}, as previously discussed. Values that depend on this conjecture are marked with the symbol $\star$. 

\begin{table}[h!]     
    \centering
    \arrayrulewidth 1pt
    \caption{Best known bounds on the order of mixed cages with girth 6. Entries marked with a $\star$ are conditional on a conjecture.}
    \label{tab:results}
    \begin{tabular}{|C{3em}C{3em}C{3em}|C{7em}|C{5em}|>{\arraybackslash}m{15em}|}
    \hline
    \textbf{Arcs} & \textbf{Edges} & \textbf{Girth} & \textbf{Lower bound} & \textbf{Exact} & \textbf{Upper bound}\\ \hline
     \arrayrulecolor{gray!30}
    0 & 2 & 6 &  &  & 12 (Prev. 16~\cite{araujo2024mixedcagesgirth6})\\\hdashline[1pt/1pt]	
    0 & 3 & 6 &  & &32\\\hdashline[1pt/1pt]	    
    1 & 2 & 6 & 18\phantom{$^\star$} & 18~\cite{araujo2019mixed} &\\\hdashline[1pt/1pt]	 
    1 & 3 & 6 & 30\phantom{$^\star$} & 30~\cite{exoo2023mixed} & 36 \cite{araujo2024mixedcagesgirth6}\\\hdashline[1pt/1pt]	
    1 & 4 & 6 & 46\phantom{$^\star$} & & 48~\cite{exoo2023mixed} (60 by Cor.~\ref{cor:order_even})\\\hdashline[1pt/1pt]	
    1 & 5 & 6 & 66\phantom{$^\star$} & & 72~\cite{exoo2023mixed} (96 by Cor.~\ref{cor:order_even}, Prev. 100~\cite{araujo2024mixedcagesgirth6})\\\hdashline[1pt/1pt]	
    1 & 6 & 6 & 90\phantom{$^\star$} & 90~\cite{exoo2023amixedgraphachieving} & \\\hdashline[1pt/1pt]	    
    1 & 7 & 6 & 118\phantom{$^\star$} & &192\\\hdashline[1pt/1pt]	
    2 & 2 & 6 & 18\phantom{$^\star$} & 27~\cite{exoo2023mixed} &\\\hdashline[1pt/1pt]	
    2 & 7 & 6 & $123^\star$ & &196\cite{araujo2024mixedcagesgirth6}\\\hdashline[1pt/1pt]	
    2 & 8 & 6 & $155^\star$ & &252 \\\hdashline[1pt/1pt]	
    2 & 9 & 6 & $191^\star$ & &320 \\\hdashline[1pt/1pt]	
    2 & 11 & 6 & $275^\star$ & &480 \\\hdashline[1pt/1pt]	
    3 & 11 & 6 & $280^\star$ & &484 \cite{araujo2024mixedcagesgirth6}\\\hdashline[1pt/1pt]	
    3 & 13 & 6 & $380^\star$ & &672 (Prev. 676 \cite{araujo2024mixedcagesgirth6})\\\hdashline[1pt/1pt]	
    4 & 16 & 6 & $565^\star$ & &1020 \\\hdashline[1pt/1pt]	
    4 & 17 & 6 & $633^\star$ & &1152 (Prev. 1156\cite{araujo2024mixedcagesgirth6})\\\hdashline[1pt/1pt]	
    4 & 19 & 6 & $781^\star$ & &1440\\\hdashline[1pt/1pt] 
    5 & 19 & 6 & $786^\star$ & &1444\\\hdashline[1pt/1pt]	
    \arrayrulecolor{black}
    \hline
    \end{tabular}
\end{table}

\subsubsection*{Acknowledgments}

G. Araujo-Pardo was supported by PAPIIT-UNAM-M{\' e}xico IN113324.

G. Araujo-Pardo and L.M. Mendoza-Cadena were supported CONAHCyT: CBF2023-2020-552 M{\' e}xico.

L. M. Mendoza-Cadena was supported by the Ministry of Innovation and Technology of Hungary from the National Research, Development and Innovation Fund -- grant ELTE TKP 2021-NKTA-62.


\bibliographystyle{abbrv}
\bibliography{improving_girth6.bib}
\end{document}